\def\si{\sigma}
\newcommand{\der}{{\rm d}}
\numberwithin{equation}{section}
\newtheorem{theorem}{Theorem}[section]
\theoremstyle{remark}
\theoremstyle{remark}
\author{Matthew Randall}
\email{mran052@gmail.com}
\title{Automorphism of solutions to Ramanujan's differential equations and other results}
\subjclass[2010]{34M15} 
\begin{document}

\begin{abstract}
In part one we prove a theorem about the automorphism of solutions to Ramanujan's differential equations. We also investigate possible applications of the result. In part two we prove a similar theorem about the automorphism of solutions to the first-order system of differential equations associated to the generalised Chazy equation with parameter $k=\frac{3}{2}$.
\end{abstract}

\maketitle

\pagestyle{myheadings}
\markboth{Randall}{Automorphism of solutions to Ramanujan's differential equations}

\section{Part One}
We say that the triple of functions $(p(x),q(x),r(x))$ of the variable $x$ satisfies Ramanujan's differential equations if the following set of equations are satisfied for the functions $p(x)$, $q(x)$ and $r(x)$ in the triple:
\begin{align}\label{rde0}
\frac{\der p}{\der x}&=\frac{1}{6}(p^2-q),\nonumber\\
\frac{\der q}{\der x}&=\frac{2}{3}(pq-r),\\
\frac{\der r}{\der x}&=pr-q^2.\nonumber
\end{align}

We prove the following theorem:
\begin{theorem}\label{thm1}
Suppose $(P(x),Q(x),R(x))$ satisfies Ramanujan's differential equations, i.e. we have
\begin{align}\label{rde}
\frac{\der}{\der x}P&=\frac{1}{6}(P^2-Q),\nonumber\\
\frac{\der}{\der x}Q&=\frac{2}{3}(PQ-R),\\
\frac{\der}{\der x}R&=PR-Q^2.\nonumber
\end{align}
Let  $T=R+\sqrt{R^2-Q^3}$ and consider the quantities
\begin{align*}
 v=\frac{3}{2}T^{\frac{1}{3}}+\frac{3}{2}\frac{Q}{T^{\frac{1}{3}}},\hspace{12pt} u=\pm \sqrt{3}(\frac{Q^2}{T^{\frac{2}{3}}}+Q+T^{\frac{2}{3}})^{\frac{1}{2}}.
 \end{align*}
Then the following holds. The triples
\begin{align*}
 &(p_2,q_2,r_2)\\
 &=\left(P+\frac{u+v}{2},\frac{8}{9}u(u+v)+\frac{1}{36}(v-u)^2,\frac{1}{54}(3u+v)(16u(u+v)-(\frac{v-u}{2})^2)\right),\\
 &(p_3,q_3,r_3)\\
 &=\left(P+\frac{v-u}{2},\frac{8}{9}u(u-v)+\frac{1}{36}(v+u)^2,\frac{1}{54}(v-3u)(16u(u-v)-(\frac{u+v}{2})^2)\right),
 \end{align*}
also satisfy Ramanujan's differential equations (\ref{rde0}), and furthermore so does the triple
\begin{align*}
(p_0,q_0,r_0)&=\left(P+\frac{1}{2}T^{\frac{1}{3}}+\frac{1}{2}\frac{Q}{T^{\frac{1}{3}}},~ \frac{3}{2}Q+\frac{5}{4}T^{\frac{2}{3}}+\frac{5}{4}\frac{Q^2}{T^{\frac{2}{3}}},~\frac{11}{4}R+\frac{21}{8}QT^{\frac{1}{3}}+\frac{21}{8}\frac{Q^2}{T^{\frac{1}{3}}}\right).
\end{align*} 
\end{theorem}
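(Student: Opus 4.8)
The plan is to convert Ramanujan's system into a polynomial system in which $u$ and $v$ take over the roles of $Q$ and $R$, and then to verify each of the three new triples by direct substitution into (\ref{rde0}).

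First I would record the differential relations satisfied by the ingredients of $T$. From (\ref{rde}) one computes $\frac{\der}{\der x}(R^2-Q^3)=2P(R^2-Q^3)$, hence $\frac{\der}{\der x}\sqrt{R^2-Q^3}=P\sqrt{R^2-Q^3}$, and therefore $\frac{\der T}{\der x}=PT-Q^2$. Writing $a=T^{1/3}$ and $b=QT^{-1/3}$ we have the algebraic identities $ab=Q$ and $a^3+b^3=T+Q^3/T=2R$ (the last because $T$ and $Q^3/T$ are the two roots of $z^2-2Rz+Q^3$); differentiating $a=T^{1/3}$ and $b=Q/a$ and using (\ref{rde}) gives the symmetric system
\begin{align*}
a'=\tfrac13(Pa-b^2),\qquad b'=\tfrac13(Pb-a^2),\qquad P'=\tfrac16(P^2-ab).
\end{align*}

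Next I would pass to $(P,u,v)$. Since $v=\tfrac32(a+b)$ and $u^2=3(a^2+ab+b^2)=3\big((a+b)^2-ab\big)$, these invert to $Q=\frac{4v^2-3u^2}{9}$ and $R=\frac{v(9u^2-8v^2)}{27}$ (equivalently $4v^3-27Qv-27R=0$), and one records the identity $Q(a+b)+R=\frac{(a+b)u^2}{6}$. Differentiating $u^2$ and $v$ and substituting the $(a,b)$-system then yields the closed polynomial system
\begin{align*}
P'=\frac{P^2}{6}-\frac{4v^2-3u^2}{54},\qquad u'=\frac{u(3P-v)}{9},\qquad v'=\frac{Pv}{3}+\frac{2v^2-3u^2}{9}.
\end{align*}
This system and the formulas for $Q,R$ are invariant under $u\mapsto-u$; since $(p_3,q_3,r_3)$ is precisely $(p_2,q_2,r_2)$ with $u$ replaced by $-u$, and the sign in the definition of $u$ is free, it suffices to establish the claim for $(p_0,q_0,r_0)$ and for $(p_2,q_2,r_2)$.

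Finally comes the verification. One rewrites each candidate triple as an explicit polynomial in $(P,u,v)$ --- for example $p_0=P+\frac v3$, $q_0=\frac{3u^2+v^2}{9}$, $r_0=\frac{v(9u^2-v^2)}{27}$, and $p_2=P+\frac{u+v}{2}$, $q_2=\frac{33u^2+30uv+v^2}{36}$, $r_2=\frac{(3u+v)(63u^2+66uv-v^2)}{216}$ --- differentiates each entry with respect to $x$ by the chain rule from the closed system above, and checks that the result equals the corresponding right-hand side $\frac16(p^2-q)$, $\frac23(pq-r)$, $pr-q^2$ of (\ref{rde0}). Each check is a polynomial identity in $P,u,v$ of total degree at most four, verifiable by expansion, so the statement is reduced to finite bookkeeping. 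I expect that bookkeeping to be the only real obstacle, and within it the equations of the shape $r'=pr-q^2$ to be by far the heaviest (genuine degree-four identities), while the $p'$- and $q'$-equations (degree two and three) are light; expanding everything in the monomials $P^2,Pu,Pv,u^2,uv,v^2,\dots$ makes the matching mechanical, and for $(p_0,q_0,r_0)$ one may equivalently stay with $a,b$ and reduce to symmetric-function identities in $a+b$ and $ab$. Since these are polynomial identities in free variables, no genericity is lost, and the theorem follows once the branches of $T^{1/3}$ and $\sqrt{R^2-Q^3}$ are fixed consistently.
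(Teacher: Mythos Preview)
Your proposal is correct, and the route is genuinely different from the paper's.

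The paper does not verify the triples directly. Instead it invokes the equivalence of Chazy's equation with the Darboux--Halphen system for the Schwarz triangle function $s(0,0,0,x)$, observes that the three combinations $p_i=-4w_i-w_j-w_k$ all solve Chazy by the equilateral symmetry of that triangle, rewrites the Darboux--Halphen system as a first-order system in $(p_1,p_2,p_3)$, and from $(p_1,q_1,r_1)=(P,Q,R)$ solves algebraically for $p_2+p_3$ and $p_2-p_3$. Your $u$ and $v$ are exactly the paper's $p_2-p_3$ and $(p_2+p_3)-2P$, so both arguments land on the same parametrisation; the difference is how one gets there and what one does with it. The paper \emph{derives} the new triples from a hidden $C_3$ symmetry and therefore never needs to check (\ref{rde0}) again, while you \emph{discover} a closed polynomial system in $(P,u,v)$ by differentiating $T^{1/3}$ and $Q/T^{1/3}$ and then check each triple by substitution. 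Your argument is entirely self-contained --- no Darboux--Halphen, no Schwarz functions --- and the $u\mapsto-u$ symmetry you exploit to reduce $(p_3,q_3,r_3)$ to $(p_2,q_2,r_2)$ is a clean shortcut; what it gives up is the explanation of \emph{why} such triples exist. The paper's approach, conversely, makes the $C_3$ action transparent and generalises immediately to other equilateral triangles (as in Part~Two), at the cost of importing background on Chazy and Darboux--Halphen.
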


\begin{proof}
The proof relies on some known facts about Chazy's equation, a third-order nonlinear ODE that is equivalent to Ramanujan's system (\ref{rde0}). 
It is well known that Chazy's equation 
\begin{equation}\label{chazy}
y'''-2yy''+3(y')^2=0
\end{equation}
is equivalent to the Darboux-Halphen system 
\begin{align}\label{dh}
w_1'&=w_2w_3-w_1w_2-w_1w_3\nonumber,\\
w_2'&=w_3w_1-w_2w_3-w_2w_1,\\
w_3'&=w_1w_2-w_3w_1-w_3w_2,\nonumber
\end{align}
with $y=-2(w_1+w_2+w_3)$. See \cite{chazypara} for further details about this equivalence.
Here
\begin{align*}
w_1&=-\frac{1}{2}\frac{\der}{\der x}\log\frac{s'}{s (s-1)},\\
w_2&=-\frac{1}{2}\frac{\der}{\der x}\log\frac{s'}{s-1},\\
w_3&=-\frac{1}{2}\frac{\der}{\der x}\log\frac{s'}{s},
\end{align*}
and $s$ is the Schwarz triangle function $s(0,0,0,x)$ given by solutions of the third-order differential equation
\[
\{s,x\}+\frac{(s')^2}{2}\left(\frac{1}{s^2}+\frac{1}{(s-1)^2}-\frac{1}{s(s-1)}\right)=0,
\]
which involves the Schwarzian derivative $\{s,x\}$ given by
\[
\{s,x\}=\frac{s'''}{s'}-\frac{3}{2}\left(\frac{s''}{s'}\right)^2.
\]
The inverse map determines a map from the complex upper half plane into a fundamental domain given by a half-infinite strip in the complex upper half plane minus a semicircle of radius half the width of the strip and centred midpoint on the boundary of the strip on the real axis. See Figure one in Chapter VII of \cite{acia} for a diagrammatic suggestion of the domain. 
Perhaps less is well known is that the combinations
\begin{align}\label{411maps}
p_1=-4w_1-w_2-w_3,\nonumber\\
p_2=-w_1-4w_2-w_3,\\
p_3=-w_1-w_2-4w_3,\nonumber
\end{align}
for the same $w_1$, $w_2$ and $w_3$ also give solutions to Chazy's equation with $y=p_1$ or $p_2$ or $p_3$, due to the equilateral symmetry of the domain of the triangle with angles $(0,0,0)$. The proof of Theorem \ref{thm1} comes from the observation of this fact. 
The maps (\ref{411maps}) give a linear invertible transformation from $(w_1,w_2,w_3)$ to $(p_1,p_2,p_3)$, with $y=p_0=\frac{1}{3}(p_1+p_2+p_3)=-2(w_1+w_2+w_3)$ again a solution to (\ref{chazy}). Inverting the maps (\ref{411maps}), we can express the Darboux-Halphen system (\ref{dh}) in terms of $(p_1,p_2,p_3)$ that are solutions of Chazy's equation (\ref{chazy}), to obtain the first order system:
\begin{align}\label{c1n}
p_1'-\frac{1}{6}p_1^2 &=\frac{8}{27}(p_2-p_1)(p_1-p_3)-\frac{1}{54}(p_2-p_3)^2,\nonumber\\
p_2'-\frac{1}{6}p_2^2 &=\frac{8}{27}(p_3-p_2)(p_2-p_1)-\frac{1}{54}(p_3-p_1)^2,\\
p_3'-\frac{1}{6}p_3^2 &=\frac{8}{27}(p_1-p_3)(p_3-p_2)-\frac{1}{54}(p_1-p_2)^2.\nonumber
\end{align} 
The upshot of this move is that we have expressed the Darboux-Halphen system (\ref{dh}) as a nonlinear system of first-order ODE, with each of $p_1$, $p_2$, $p_3$ satisfying Chazy's equation. The solutions to (\ref{chazy}) are now permuted by the cyclic group of three elements $C_3$.
 Now, it is also very well-known that Chazy's equation (\ref{chazy}) can be written as the Ramanujan system (\ref{rde0}) 
\begin{align*}
p_1'&=\frac{1}{6}(p_1^2-q_1),\\
q_1'&=\frac{2}{3}(p_1q_1-r_1),\\
r_1'&=p_1r_1-q_1^2,
\end{align*}  
with $y=p_1$.
Comparing this to the system of differential equations given in (\ref{c1n}) we can solve for $q_1$ and $r_1$ in terms of $p_1$, $p_2$ and $p_3$ to get
 \[
 q_1=\frac{16}{9}(p_2-p_1)(p_3-p_1)+\frac{1}{9}(p_2-p_3)^2
 \] 
 and
 \[
r_1=\frac{1}{27}(2 p_1-p_2-p_3)(32(p_3-p_1)(p_2-p_1)-(p_2-p_3)^2).
 \]
Similarly, we obtain
\begin{align*}
 q_2&=\frac{16}{9}(p_3-p_2)(p_1-p_2)+\frac{1}{9}(p_3-p_1)^2,\\
  q_3&=\frac{16}{9}(p_1-p_3)(p_2-p_3)+\frac{1}{9}(p_1-p_2)^2,
\end{align*}
for $q_2=-6(p_2'-\frac{1}{6}p_2^2)$ and $q_3=-6(p_3'-\frac{1}{6}p_3^2)$,
and
\begin{align*}
r_2&=\frac{1}{27}(2 p_2-p_3-p_1)(32(p_1-p_2)(p_3-p_2)-(p_3-p_1)^2),\\
r_3&=\frac{1}{27}(2 p_3-p_1-p_2)(32(p_2-p_3)(p_1-p_3)-(p_1-p_2)^2),
\end{align*}
for $r_2=-\frac{3}{2}(q_2'-\frac{2}{3}p_2 q_2)$ and $r_3=-\frac{3}{2}(q_3'-\frac{2}{3}p_3 q_3)$.
We now pose the following question: Given the triple $(p_1,q_1,r_1)=(P(x),Q(x),R(x))$ satisfying Ramanujan's differential equations (\ref{rde}), can we solve for $(p_2,q_2,r_2)$ and $(p_3,q_3,r_3)$ in terms of $P$, $Q$ and $R$?
The answer is yes and to see this we first substitute the values $(p_1,q_1,r_1)=(P,Q,R)$ into the formula for $q_1=Q$ and $r_1=R$ above, and let $V=p_2+p_3$ and $u=p_2-p_3$.
Then we find that 
 \[
 9Q=16P^2-16 P V+4 V^2-3u^2
 \] 
 and
 \[
 R=\frac{1}{27}(2P-V)(8Q-3u^2).
 \]
 This gives an algebraic relation for $u$ and $V$ involving $P$, $Q$ and $R$.
 Solving this for $u$ and $V$ gives 
 \begin{align*}
 V&=2P+\frac{3}{2}T^{\frac{1}{3}}+\frac{3}{2}\frac{Q}{T^{\frac{1}{3}}},\\
 u&=\pm \sqrt{3}(\frac{Q^2}{T^{\frac{2}{3}}}+Q+T^{\frac{2}{3}})^{\frac{1}{2}},
 \end{align*}
 where $T=R+\sqrt{R^2-Q^3}$.
 Let $v=\frac{3}{2}T^{\frac{1}{3}}+\frac{3}{2}\frac{Q}{T^{\frac{1}{3}}}$ so that $V=2P+v$. Substitute these formulas for $u$ and $V$ back into the expressions for $(p_2,q_2,r_2)$ and $(p_3,q_3,r_3)$. Consequently, we find that given $(P,Q,R)$ satisfying Ramanujan's differential equations (\ref{rde}), we obtain
 \begin{align*}
 &(p_2,q_2,r_2)=\\
 &\left(P+\frac{u+v}{2},\frac{8}{9}u(u+v)+\frac{1}{36}(v-u)^2,\frac{1}{54}(3u+v)(16u(u+v)-(\frac{v-u}{2})^2)\right),\\
& (p_3,q_3,r_3)=\\
 &\left(P+\frac{v-u}{2},\frac{8}{9}u(u-v)+\frac{1}{36}(v+u)^2,\frac{1}{54}(v-3u)(16u(u-v)-(\frac{u+v}{2})^2)\right),
 \end{align*}
as further solutions to the Ramanujan system.
Furthermore, as $p_0=\frac{1}{3}(p_1+p_2+p_3)$ is also a solution to Chazy's equation, we find that the triple $(p_0, q_0, r_0)$ given by
\begin{align}\label{sum}
p_0&=P+\frac{1}{2}T^{\frac{1}{3}}+\frac{1}{2}\frac{Q}{T^{\frac{1}{3}}},\nonumber\\
q_0&=\frac{3}{2}Q+\frac{5}{4}T^{\frac{2}{3}}+\frac{5}{4}\frac{Q^2}{T^{\frac{2}{3}}},\\
r_0&=\frac{11}{4}R+\frac{21}{8}QT^{\frac{1}{3}}+\frac{21}{8}\frac{Q^2}{T^{\frac{1}{3}}},\nonumber
\end{align} 
also satisfies Ramanujan's differential equations (\ref{rde0}).
\end{proof}
Now suppose we are given the triple $(p_0,q_0,r_0)$ instead as a  triple satisfying Ramanujan's differential equations (\ref{rde0}). We might be naturally inclined to invert the equations given by (\ref{sum}) to recover $(P,Q,R)$. Indeed we can compute the inverse map which is amazingly given by
\begin{align*}
P&=p_0+\frac{1}{2}t^{\frac{1}{3}}+\frac{1}{2}\frac{q_0}{t^{\frac{1}{3}}},\\
Q&=\frac{3}{2}q_0+\frac{5}{4}t^{\frac{2}{3}}+\frac{5}{4}\frac{q_0^2}{t^{\frac{2}{3}}},\\
R&=\frac{11}{4}r_0+\frac{21}{8}q_0t^{\frac{1}{3}}+\frac{21}{8}\frac{q_0^2}{t^{\frac{1}{3}}},
\end{align*} 
where $t=r_0+\sqrt{r_0^2-q_0^3}$.
This solution is precisely the one obtained from the addition formula $\frac{1}{3}(p_1+p_2+p_3)$ with $p_1=p_0$
and the analogous solutions for $p_2$ and $p_3$ obtained from the triple $(p_0,q_0,r_0)$. Thus there seems to be a duality between the elements $(P,Q,R)\leftrightarrow(p_0,q_0,r_0)$. What this entails for the solutions of (\ref{rde}) given by quasi-modular form $E_2$ and modular forms $E_4$, $E_6$ remains to be investigated.

\section{Applications}
It would be interesting to investigate the applications of Theorem \ref{thm1}, for example, to see if we can deduce results obtained in the book \cite{ell}. We now assume that we know a solution of (\ref{rde}) given by
\begin{align*}
P&=1-24\sum^{\infty}_{n=1}\si_1(n)q^n=E_2,\\
Q&=1+240\sum^{\infty}_{n=1}\si_3(n)q^n=E_4,\\
R&=1-504\sum^{\infty}_{n=1}\si_5(n)q^n=E_6.
\end{align*}
Here $E_2$ is a quasi-modular form given by the Eisenstein series of weight 2, while $E_4$ and $E_6$ are modular forms given by the Eisenstein series of weight 4 and 6 respectively. The functions here involve $\si_1(n)$ the sum of divisor function, $\si_3(n)$ the sum of cube of divisor function and $\si_5(n)$ the sum of fifth powers of divisor function. Also $q=e^{2 \pi i x}$ is the nome, and with $\der q=2 \pi i \der x$, this gives
\[
\frac{\der}{\der x}=2\pi i q\frac{\der}{\der q}
\]
as a change of variable, so that the Ramanujan system (\ref{rde}) can be rewritten as
\begin{align*}
\pi i q\frac{\der}{\der q}P&=\frac{1}{12}(P^2-Q),\\
\pi i q\frac{\der}{\der q}Q&=\frac{1}{3}(PQ-R),\\
\pi i q\frac{\der}{\der q}R&=\frac{1}{2}(PR-Q^2).
\end{align*}
Now given $(p_1,q_1,r_1)=(P,Q,R)$, we compute and find that
\begin{align*}
p_2=4-96q^4-288q^8-384q^{12}-672q^{16}-\ldots=4P(q^4),\\
p_3=1+24q-72q^2+96q^{3}-168q^4+144q^{5}-\ldots=P(-q),
\end{align*}
with $p_0=\frac{1}{3}P(q)+\frac{4}{3}P(q^4)+\frac{1}{3}P(-q)=2P(q^2)$.
In the case for $p_2$, we see that identifying $\tilde q=q^4$ gives back the solution $(P,Q,R)$ to the differential equations (\ref{rde}) with an appropriate constant rescaling and likewise for the case $p_3$, the variable to be identified is $\tilde q=-q$. Similarly, in the case of $p_0$, identifying $\tilde q=q^2$ gives us back a constant rescaling of $(P,Q,R)$. 
Taking the triple
\begin{align*}
(p_0,q_0,r_0)=(2P(q^2),4Q(q^2),8R(q^2))
\end{align*} 
satisfying (\ref{rde0}), we can apply Theorem \ref{thm1} to iterate the process and get the triples
\begin{align*}
(p_4,q_4,r_4)&=(8P(q^8),64Q(q^8),512R(q^8)),\\
(p_5,q_5,r_5)&=(2P(-q^2),4Q(-q^2),8R(-q^2)),
\end{align*} 
satisfying Ramanujan's differential equations with $\frac{1}{3}(p_0+p_4+p_5)=p_2$.
This addition property holds in general because $\si_1(4n)+2\si(n)=3\si_1(2n)$ holds for all natural numbers $n$.

It is also interesting to deduce results involving the hypergeometric function ${}_2F_{1}\left(\frac{1}{2},\frac{1}{2};1;x\right)$. This assumes that we know that the above solutions given by the Eisenstein series can be expressed in terms of Jacobi's theta functions. Firstly, the formulas for the Schwarz triangle function $s=s(0,0,0,x)$ and its derivative give
\[
s=\frac{w_1-w_3}{w_2-w_3}=\frac{p_1-p_3}{p_2-p_3}=\frac{\frac{u-v}{2}}{u}=\frac{1}{2}-\frac{1}{2}\frac{v}{u}
\]
and
\[
s'=\frac{2}{3}s(p_2-p_1)=\frac{1}{3}s(u+v)=\frac{1}{6u}(u-v)(u+v).
\]
The inversion formula for $s(x)$ gives $x=\frac{z_2(s)}{z_1(s)}$ where $z_1$, $z_2$ are two linearly independent solutions of the hypergeometric differential equation
\[
s(1-s)z_{ss}+(1-2s)z_{s}-\frac{1}{4}z=0. 
\]
We shall take $z_1(s)={}_2F_{1}(\frac{1}{2},\frac{1}{2};1;s)$.
Furthermore, we have
\[
\frac{\der}{\der x}=\frac{z_1^2}{W}\frac{\der}{\der s},
\]
which gives
\[
s'=\frac{z_1^2}{W}
\]
where $W=c_0s^{-1}(s-1)^{-1}$ is the Wronskian of $z_1$ and $z_2$ and $c_0$ is a constant. 
From these formulas for $s$ and $s'$, we obtain
\[
z_1^2=-\frac{2}{3}c_0u.
\]
Suppose now that
\begin{align*}
Q&=\frac{1}{2}(a^8+b^8+c^8),\\
R&=\frac{1}{2}(b^{12}+c^{12}-3a^8(b^4+c^4))
\end{align*}
with $b^4=a^4+c^4$. Here $a=\theta_2(q)$, $b=\theta_3(q)$ and $c=\theta_4(q)$ are given by Jacobi's theta functions. 
Making the substitution, we find through eliminating $c^4$ that
\begin{align*}
v&=-\frac{3}{2}a^4-\frac{3}{2}b^4,\\
u&=3a^2 b^2.
\end{align*}
Substituting these into the formulas for $s$ and $s'$ give 
\[
s=\frac{1}{2}+\frac{a^4+b^4}{4a^2b^2}
\]
and
\[
s'=-\frac{(a^2+b^2)^2(a-b)^2(a+b)^2}{8a^2 b^2}.
\]
Together this gives
\[
z_1(s)^2=-2 c_0 a^2 b^2,
\]
where
\[
s=\frac{1}{2}+\frac{a^4+b^4}{4a^2b^2}.
\]
Now the hypergeometric function ${}_2F_{1}(\frac{1}{2},\frac{1}{2};1;x)$ is related to ${}_2F_{1}(\frac{1}{4},\frac{3}{4};1;x)$ through a quadratic transformation in 2 different ways. Specifically, we have 
\[
{}_2F_{1}\left(\frac{1}{2},\frac{1}{2};1;x\right)=(1-2x)^{-\frac{1}{2}}{}_2F_{1}\left(\frac{1}{4},\frac{3}{4};1;1-\frac{1}{(2x-1)^2}\right)
\]
and
\[
{}_2F_{1}\left(\frac{1}{2},\frac{1}{2};1;x\right)=(1-\frac{x}{2})^{-\frac{1}{2}}{}_2F_{1}\left(\frac{1}{4},\frac{3}{4};1;\frac{x^2}{(2-x)^2}\right).
\]
If we let $\kappa=\frac{a^2}{b^2}$ (this $\kappa$ is used to define the elliptic modulus), then using these 2 quadratic transformations we find
\[
{}_2F_{1}\left(\frac{1}{2},\frac{1}{2};1;\frac{1}{2}+\frac{\kappa}{4}+\frac{1}{4\kappa}\right)=\kappa^{\frac{1}{2}} {}_2F_{1}\left(\frac{1}{2},\frac{1}{2};1;1-\kappa^2\right),
\]
or that
\[
z_1(s)^2=-2c_0a^2 b^2=\frac{a^2}{b^2}\left({}_2F_{1}\left(\frac{1}{2},\frac{1}{2};1;1-\kappa^2\right)\right)^2.
\]
This gives a relationship
\begin{align*}
-2 c_0 b^4=\left({}_2F_{1}\left(\frac{1}{2},\frac{1}{2};1;\frac{b^4-a^4}{b^4}\right)\right)^2=\left({}_2F_{1}\left(\frac{1}{2},\frac{1}{2};1;\frac{c^4}{b^4}\right)\right)^2,
\end{align*}
between the the theta function $b$ and the hypergeometric function $z_1$ as a function of the square of the complementary modulus $1-\kappa^2$.
If we made an alternative substitution eliminating $a$ instead to determine $u$, $v$ in terms of $b$ and $c$, we obtain
\begin{align*}
v&=\frac{3}{2}b^4+\frac{3}{2}c^4,\\
u&=3b^2 c^2.
\end{align*}
This gives instead the Schwarz triangle function
\[
s=\frac{1}{2}+\frac{b^4+c^4}{4b^2c^2}
\]
and
\[
s'=-\frac{(b^2+c^2)^2(b-c)^2(b+c)^2}{8b^2 c^2},
\]
and for this formula for $s$ we find
\begin{align*}
z_1(s)^2=-2c_0b^2 c^2=\frac{c^2}{b^2}\left({}_2F_{1}\left(\frac{1}{2},\frac{1}{2};1;1-\frac{c^4}{b^4}\right)\right)^2,
\end{align*}
which gives
\[
-2c_0b^4=\left({}_2F_{1}\left(\frac{1}{2},\frac{1}{2};1;\frac{a^4}{b^4}\right)\right)^2=\left({}_2F_{1}\left(\frac{1}{2},\frac{1}{2};1;\kappa^2\right)\right)^2.
\]
The identification of this with the result that relates the theta function $b$ to the hypergeometric function $z_1$ as a function of the square of the modulus $\kappa^2$, 
\[
b^2={}_2F_{1}\left(\frac{1}{2},\frac{1}{2};1;\kappa^2\right),
\]
which is known to Jacobi and Ramanujan (see \cite{ell}), requires us to take $c_0=-\frac{1}{2}$. 

\section{Part Two}
The second part of the paper concerns the generalised Chazy equation with parameter $k=\frac{3}{2}$.
Using the same notation as in part one, we say that the triple of functions $(p(x),q(x),r(x))$ of the variable $x$ satisfies the non-linear system of differential equations associated to the generalised Chazy equation with parameter $k$ if the following set of equations are satisfied for the functions $p(x)$, $q(x)$ and $r(x)$ in the triple:
\begin{align*}\label{ndek}
\frac{\der p}{\der x}&=\frac{1}{6}(p^2-q),\nonumber\\
\frac{\der q}{\der x}&=\frac{2}{3}(pq-r),\\
\frac{\der r}{\der x}&=pr+\frac{k^2}{36-k^2}q^2.\nonumber
\end{align*}
It can be seen that taking $y=p$ gives a solution to the generalised Chazy equation 
\begin{equation*}\label{gchazy}
y'''-2yy''+3(y')^2-\frac{4}{36-k^2}(6y'-y^2)^2=0
\end{equation*}
with parameter $k$. The generalised Chazy equation is introduced in \cite{chazy1}, \cite{chazy2} and studied more recently in \cite{co96}, \cite{acht} and \cite{ach}. 
When $k=\frac{3}{2}$ we have the following system of equations:
\begin{align}\label{nde1}
\frac{\der p}{\der x}&=\frac{1}{6}(p^2-q),\nonumber\\
\frac{\der q}{\der x}&=\frac{2}{3}(pq-r),\\
\frac{\der r}{\der x}&=pr+\frac{1}{15}q^2.\nonumber
\end{align}
Taking $y=p$ then gives a solution to the generalised Chazy equation 
\begin{equation}\label{chazy32}
y'''-2yy''+3(y')^2-\frac{4}{36-(\frac{3}{2})^2}(6y'-y^2)^2=0
\end{equation}
with parameter $k=\frac{3}{2}$. We prove the following theorem, similar to Theorem \ref{thm1} in the first part. 
\begin{theorem}\label{thm2}
Suppose $(P(x),Q(x),R(x))$ satisfies the non-linear system of differential equations associated to the generalised Chazy equation with parameter $k=\frac{3}{2}$, i.e. we have
\begin{align*}
\frac{\der}{\der x}P&=\frac{1}{6}(P^2-Q),\nonumber\\
\frac{\der}{\der x}Q&=\frac{2}{3}(PQ-R),\\
\frac{\der}{\der x}R&=PR+\frac{1}{15}Q^2.\nonumber
\end{align*}
Let $Z=\frac{3R}{2Q}+\sqrt{-\frac{3}{5}Q}$ and $\bar{Z}=\frac{3R}{2Q}-\sqrt{-\frac{3}{5}Q}$.
Then the following holds. The triples
\begin{align*}
 (p_2,q_2,r_2)&=\left(P+Z,-\frac{5}{3}\bar{Z}^2,\frac{5}{9}\bar{Z}^2(2Z-\bar{Z})\right),\\
 (p_3,q_3,r_3)&=\left(P+\bar{Z},-\frac{5}{3}Z^2,\frac{5}{9}Z^2(2\bar{Z}-Z)\right),
 \end{align*}
also satisfy the system of differential equations (\ref{nde1}), while the triple
\begin{align*}
(p_0,q_0,r_0)&=\left(P+\frac{R}{Q},~ \frac{3}{5}Q-3\frac{R^2}{Q^2},~\frac{9}{5}R+3\frac{R^3}{Q^3}\right)
\end{align*} 
with $p_0$ given by $p_0=\frac{1}{3}(p_1+p_2+p_3)$, satisfies the non-linear system of differential equations associated to the generalised Chazy equation with parameter $k=3$, i.e. we have
\begin{align}\label{nde2}
\frac{\der}{\der x}p_0&=\frac{1}{6}(p_0^2-q_0),\nonumber\\
\frac{\der}{\der x}q_0&=\frac{2}{3}(p_0q_0-r_0),\\
\frac{\der}{\der x}r_0&=p_0r_0+\frac{1}{3}q_0^2.\nonumber
\end{align}
In this case $y=p_0$ satisfies the generalised Chazy equation with $k=3$.
Conversely, given a triple $(p_0,q_0,r_0)$ that satisfies the system of differential equations (\ref{nde2}) associated to the generalised Chazy equation with parameter $k=3$, we can find a triple $(P,Q,R)$ that satisfies the system of differential equations (\ref{nde1}) associated to the generalised Chazy equation with parameter $k=\frac{3}{2}$, where 
\begin{align*}
P&=p_0-\frac{r_0}{4z-q_0},\\
Q&=\frac{5}{3}z,\\
R&=\frac{5}{3}\frac{r_0 z}{4z-q_0},
\end{align*} 
and $z$ is a root of the cubic equation
\begin{align*}
16z^3-24q_0z^2+9q_0^2z-q_0^3-3r_0^2=0.
\end{align*}
\end{theorem}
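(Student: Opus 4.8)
The plan is to follow the route of the proof of Theorem~\ref{thm1}. The generalised Chazy equation (\ref{chazy32}) with $k=\frac{3}{2}$ has, just as classical Chazy does, a realisation in terms of logarithmic derivatives of a Schwarz triangle function --- now a triangle one of whose vertices is no longer a cusp --- together with a symmetry of the associated domain (see \cite{co96}, \cite{acht}, \cite{ach}). The first step is to pass, via the analog of the linear substitutions (\ref{411maps}), to a closed first-order system (the analog of (\ref{c1n})) for a triple $(p_1,p_2,p_3)$ of solutions of (\ref{chazy32}), and then, comparing with the Ramanujan-type normalisation (\ref{nde1}), to express the companions $q_i,r_i$ as polynomials in $p_1,p_2,p_3$ --- the substitutes for the formulas $q_1=\frac{16}{9}(p_2-p_1)(p_3-p_1)+\frac{1}{9}(p_2-p_3)^2$ and $r_1=\frac{1}{27}(2p_1-p_2-p_3)(32(p_3-p_1)(p_2-p_1)-(p_2-p_3)^2)$ of Part One.

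Given $(p_1,q_1,r_1)=(P,Q,R)$, the content of the first assertion is then algebraic: set $V=p_2+p_3$ and $u=p_2-p_3$, impose $q_1=Q$ and $r_1=R$, and solve for $u$ and $V$. For $k=\frac{3}{2}$ these equations reduce --- in contrast with the cubic in $T^{1/3}$ of Theorem~\ref{thm1} --- to a quadratic for $u$ together with a linear equation for $V$, giving $p_2=P+Z$, $p_3=P+\bar{Z}$ with $Z,\bar{Z}=\frac{3R}{2Q}\pm\sqrt{-\frac{3}{5}Q}$ (that no cube root enters is why the formulas of Theorem~\ref{thm2} are tidier than those of Theorem~\ref{thm1}). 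Substituting back into the polynomial expressions for $q_i,r_i$ gives the displayed triples, and what remains is to confirm that $p_2$ and $p_3$ are genuine solutions of (\ref{chazy32}) --- equivalently that $(p_i,q_i,r_i)$ satisfies all of (\ref{nde1}); this is a direct calculation, differentiating $Z,\bar{Z}$ by means of the ODEs (\ref{nde1}) for $P,Q,R$ (using $Z+\bar{Z}=\frac{3R}{Q}$ and $(Z-\bar{Z})^{2}=-\frac{12}{5}Q$), in which the coefficient book-keeping has to be watched closely.

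The main obstacle is the next claim: that $p_0=\frac{1}{3}(p_1+p_2+p_3)=P+\frac{R}{Q}$, with the stated $q_0,r_0$, solves not (\ref{chazy32}) but the $k=3$ equation, i.e.\ the system (\ref{nde2}). That the average passes to a different value of $k$ is the feature with no counterpart in Theorem~\ref{thm1}, where averaging preserved the equation, and giving a conceptual reason for it --- presumably a transformation of the underlying hypergeometric equation linking the $k=\frac{3}{2}$ and $k=3$ data, the genuine (now $k$-shifting) analog of the passage $p_0=2P(q^2)$ of Part One --- is the subtle point. Failing that, the statement can be checked head-on: form $q_0=\frac{3}{5}Q-3\frac{R^2}{Q^2}$ and $r_0=\frac{9}{5}R+3\frac{R^3}{Q^3}$, differentiate using (\ref{nde1}), and verify the three equations of (\ref{nde2}); the first two, $p_0'=\frac{1}{6}(p_0^2-q_0)$ and $r_0=p_0q_0-\frac{3}{2}q_0'$, come out at once, and only $r_0'=p_0r_0+\frac{1}{3}q_0^2$ requires work.

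Finally, for the converse I would invert the construction. Starting from a solution $(p_0,q_0,r_0)$ of (\ref{nde2}), recovering a triple satisfying (\ref{nde1}) means undoing the three-to-one map above; its three preimages are $(P,Q,R)$, $(p_2,q_2,r_2)$, $(p_3,q_3,r_3)$, permuted by monodromy just as the cyclic triple $p_1,p_2,p_3$ was in Part One, so a cubic is unavoidable. Writing $Q=\frac{5}{3}z$ and eliminating $P$ and $R$ from $p_0=P+\frac{R}{Q}$, $q_0=\frac{3}{5}Q-3\frac{R^2}{Q^2}$, $r_0=\frac{9}{5}R+3\frac{R^3}{Q^3}$ yields $z(4z-3q_0)^2=q_0^3+3r_0^2$, that is $16z^3-24q_0z^2+9q_0^2z-q_0^3-3r_0^2=0$; any root $z$ then produces $P=p_0-\frac{r_0}{4z-q_0}$, $Q=\frac{5}{3}z$, $R=\frac{5}{3}\frac{r_0z}{4z-q_0}$, and checking that these return to $(p_0,q_0,r_0)$ under the forward map finishes the proof. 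The single delicate point, as in Part One, is the choice among the three roots, which the geometric picture resolves.
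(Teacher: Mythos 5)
Your route coincides with the paper's: reduce to the symmetric first-order system (\ref{c2n}) through the $w_i$ and the maps (\ref{411maps2}), read off $q_1=-\frac{5}{3}(p_2-p_3)^2$ and $r_1=-\frac{1}{3}q_1(2p_1-p_2-p_3)$, solve these algebraic relations for $p_2,p_3$ in terms of $(P,Q,R)$, and invert the averaged map for the converse; where the paper invokes the structural facts (notably, via \cite{r16}, that $p_0=-2(w_1+w_2+w_3)$ solves the $k=3$ equation), you substitute a direct verification using (\ref{nde1}), which is a reasonable, self-contained replacement.

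Two points need repair. First, the elimination gives $p_2-p_3=u$ with $u^2=-\frac{3}{5}Q$ and $p_2+p_3=2P+\frac{3R}{Q}$, hence $p_{2,3}=P+\frac{3R}{2Q}\pm\frac{1}{2}\sqrt{-\frac{3}{5}Q}$; the square root enters with a factor $\frac{1}{2}$, exactly as in the body of the paper's proof. Your $Z,\bar Z=\frac{3R}{2Q}\pm\sqrt{-\frac{3}{5}Q}$ and the auxiliary identity $(Z-\bar Z)^2=-\frac{12}{5}Q$ contradict your own derivation (since $Z-\bar Z=u$ forces $(Z-\bar Z)^2=-\frac{3}{5}Q$), and the deferred ``direct calculation'' would fail with them: writing $Z=\frac{3R}{2Q}+cS$ with $S^2=-\frac{3}{5}Q$, the single equation $p_2'=\frac{1}{6}(p_2^2-q_2)$ already forces $c^2=\frac{1}{4}$. (The printed statement of Theorem \ref{thm2} carries the same slip; only with the halved root do the displayed triples satisfy (\ref{nde1}), which is the convention used in the paper's proof.) Second, in the converse, checking that the candidate $(P,Q,R)$ maps forward to $(p_0,q_0,r_0)$ is necessary but not sufficient: you still must show that $(P,Q,R)$ satisfies the differential system (\ref{nde1}), either by differentiating $z$ via the cubic together with (\ref{nde2}) and verifying (\ref{nde1}) directly, or by arguing that every solution of (\ref{nde2}) lies in the image of the forward map (the Schwarz-function parametrisation, which is what the paper implicitly relies on). Finally, no delicate root choice is needed there: the three roots of the cubic correspond to the three preimages $(P,Q,R)$, $(p_2,q_2,r_2)$, $(p_3,q_3,r_3)$, and any one of them serves.
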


\begin{proof}
Like in part one, the proof makes use of some facts about the generalised Chazy equation with parameter $k=\frac{3}{2}$. This generalised Chazy equation with parameter $k=\frac{3}{2}$ can be written in the symmetric form as a first-order system of differential equations
\begin{equation}\label{dh32}
w_1'=w_2w_3-w_1w_2-w_1w_3+\frac{16}{9}((w_1-w_2)(w_3-w_1)+(w_2-w_3)(w_1-w_2)+(w_3-w_1)(w_2-w_3)),
\end{equation}
with cyclic permutation, in analogy to the Darboux-Halphen system. Here
\begin{align*}
w_1&=-\frac{1}{2}\frac{\der}{\der x}\log\frac{s'}{s (s-1)},\\
w_2&=-\frac{1}{2}\frac{\der}{\der x}\log\frac{s'}{s-1},\\
w_3&=-\frac{1}{2}\frac{\der}{\der x}\log\frac{s'}{s},
\end{align*}
and $s$ is the Schwarz triangle function $s(\frac{2}{3},\frac{2}{3},\frac{2}{3},x)$ given by solutions of the differential equation
\[
\{s,x\}+\frac{5}{18}(s')^2\left(\frac{1}{s^2}+\frac{1}{(s-1)^2}-\frac{1}{s(s-1)}\right)=0.
\]
The solutions to the generalised Chazy equation (\ref{chazy32}) are given by the combinations
\begin{align}\label{411maps2}
p_1=-4w_1-w_2-w_3,\nonumber\\
p_2=-w_1-4w_2-w_3,\\
p_3=-w_1-w_2-4w_3,\nonumber
\end{align}
with $y=p_1$ or $p_2$ or $p_3$. If we consider $p_0=\frac{1}{3}(p_1+p_2+p_3)=-2(w_1+w_2+w_3)$, we find that this solves the generalised Chazy equation with parameter $k=3$ instead. 
In other words, $y=p_0$ solves the differential equation
\begin{equation*}\label{chazy3}
y'''-2yy''+3(y')^2-\frac{4}{36-3^2}(6y'-y^2)^2=0.
\end{equation*}
See also \cite{r16} for the occurrence of the Schwarz triangle function $s(\frac{2}{3},\frac{2}{3},\frac{2}{3},x)$ in the solutions to the generalised Chazy equation with $k=\frac{3}{2}$ and $k=3$.
The triangular domain again has equilateral symmetry with angles $(\frac{2\pi}{3},\frac{2\pi}{3},\frac{2\pi}{3})$. We also have the same linear invertible transformation from $(w_1,w_2,w_3)$ to $(p_1,p_2,p_3)$, and inverting the map (\ref{411maps2}), we can express the symmetric first-order system (\ref{dh32}) in terms of $(p_1,p_2,p_3)$ that are each solutions of the generalised Chazy equation with parameter $k=\frac{3}{2}$ to get the first order system:
\begin{align}\label{c2n}
p_1'-\frac{1}{6}p_1^2 &=\frac{5}{18}(p_2-p_3)^2,\nonumber\\
p_2'-\frac{1}{6}p_2^2 &=\frac{5}{18}(p_3-p_1)^2,\\
p_3'-\frac{1}{6}p_3^2 &=\frac{5}{18}(p_1-p_2)^2.\nonumber
\end{align}
The solutions are again permuted by the cyclic group of three elements $C_3$. As already mentioned, this generalised Chazy equation (\ref{chazy32}) can be written as the nonlinear system of first-order differential equations given by
\begin{align*}
p_1'&=\frac{1}{6}(p_1^2-q_1),\\
q_1'&=\frac{2}{3}(p_1q_1-r_1),\\
r_1'&=p_1r_1+\frac{1}{15}q_1^2,
\end{align*}  
with $p_1=y$.
Comparing this to (\ref{c2n}) and solving these differential equations for $q_1$, $r_1$ in terms of $p_1$, $p_2$ and $p_3$ gives
 \[
 q_1=-\frac{5}{3}(p_2-p_3)^2
 \] 
 and
 \[
r_1=-\frac{1}{3}q_1(2 p_1-p_2-p_3).
 \]
Similarly, we obtain
\begin{align*}
 q_2&=-\frac{5}{3}(p_3-p_1)^2,\\
  q_3&=-\frac{5}{3}(p_1-p_2)^2,
\end{align*}
for $q_2=-6(p_2'-\frac{1}{6}p_2^2)$ and $q_3=-6(p_3'-\frac{1}{6}p_3^2)$,
and
\begin{align*}
r_2&=-\frac{1}{3}q_2(2 p_2-p_3-p_1),\\
r_3&=-\frac{1}{3}q_3(2 p_3-p_1-p_2),
\end{align*}
for $r_2=-\frac{3}{2}(q_2'-\frac{2}{3}p_2 q_2)$ and $r_3=-\frac{3}{2}(q_3'-\frac{2}{3}p_3 q_3)$.
We again pose the problem, that given $(p_1,q_1,r_1)=(P,Q,R)$ a solution to the system of differential equations (\ref{nde1}) associated to the generalised Chazy equation with $k=\frac{3}{2}$, namely
\begin{align*}
P'&=\frac{1}{6}(P^2-Q),\\
Q'&=\frac{2}{3}(PQ-R),\\
R'&=PR+\frac{1}{15}Q^2,
\end{align*}  
can we solve for $(p_2,q_2,r_2)$ and $(p_3,q_3,r_3)$ in terms of $P$, $Q$ and $R$? The answer is again positive. Substitute $(p_1,q_1,r_1)=(P,Q,R)$ into the formula for $q_1$ and $r_1$ above, and we find that 
 \[
Q=-\frac{5}{3}(p_2-p_3)^2
 \] 
 and
 \[
 R=-\frac{1}{3}Q(2P-p_2-p_3).
 \]
 Solving this for $p_2$ and $p_3$ gives 
 \begin{align*}
 p_2&=P+\frac{3R}{2Q}\pm\frac{1}{2}\sqrt{-\frac{3}{5}Q},\\
 p_3&=P+\frac{3R}{2Q}\mp\frac{1}{2}\sqrt{-\frac{3}{5}Q}.
 \end{align*}
Let $Z=\frac{3R}{2Q}+\frac{1}{2}\sqrt{-\frac{3}{5}Q}$ and $\bar{Z}=\frac{3R}{2Q}-\frac{1}{2}\sqrt{-\frac{3}{5}Q}$.
The consequence of this is that given $(P,Q,R)$ solutions to the system of differential equations (\ref{nde1}), we obtain the triple
 \begin{align*}
& \left(P+Z,-\frac{5}{3}\bar{Z}^2,\frac{5}{9}\bar{Z}^2(2Z-\bar{Z})\right),\\
 &\left(P+\bar{Z},-\frac{5}{3}Z^2,\frac{5}{9}Z^2(2\bar{Z}-Z)\right),
 \end{align*}
as further solutions to the same system of differential equations (\ref{nde1}).
Furthermore $p_0=\frac{1}{3}(p_1+p_2+p_3)$ is now a solution to the generalised Chazy equation with $k=3$, and we find
\begin{align*}
p_0&=P+\frac{R}{Q},\\
q_0&=\frac{3}{5}Q-3\frac{R^2}{Q^2},\\
r_0&=\frac{9}{5}R+3\frac{R^3}{Q^3},
\end{align*} 
where $(p_0, q_0, r_0)$ satisfies the first-order system of differential equations associated to the $k=3$ generalised Chazy equation
\begin{align*}
p_0'&=\frac{1}{6}(p_0^2-q_0),\\
q_0'&=\frac{2}{3}(p_0q_0-r_0),\\
r_0'&=p_0r_0+\frac{1}{3}q_0^2.
\end{align*}  
Conversely, suppose now that we are given $(p_0,q_0,r_0)$ instead satisfying the first-order system associated to the $k=3$ generalised Chazy equation. We would like to invert the above map to recover $(P,Q,R)$ satisfying the first-order system associated to the $k=\frac{3}{2}$ generalised Chazy equation. We can compute the inverse map which is given by
\begin{align*}
P&=p_0-\frac{r_0}{4z-q_0},\\
Q&=\frac{5}{3}z,\\
R&=\frac{5}{3}\frac{r_0 z}{4z-q_0},
\end{align*} 
where $z$ satisfies the cubic equation
\begin{align*}
16z^3-24q_0z^2+9q_0^2z-q_0^3-3r_0^2=0.
\end{align*}
\end{proof}
The generalised Chazy equation with parameter $k=3$ is linearisable and this suggests a method of computing the solutions to the generalised Chazy equation with parameter $k=\frac{3}{2}$ from its linear counterpart. The relationship between the generalised Chazy equations with these two parameters was also investigated in \cite{r16}. There is also an interesting relationship involving the first-order system associated to the generalised Chazy equation with $k=2$ and the first-order system (\ref{nde2}) for the generalised Chazy equation with $k=3$. This boils down to the observation that for
\begin{align*}
w_1&=-\frac{1}{2}\frac{\der}{\der x}\log\frac{s'}{s (s-1)},\\
w_2&=-\frac{1}{2}\frac{\der}{\der x}\log\frac{s'}{s-1},\\
w_3&=-\frac{1}{2}\frac{\der}{\der x}\log\frac{s'}{s},
\end{align*}
where $s$ is the Schwarz triangle function $s(\frac{1}{3},\frac{1}{3},1,x)$ given by solutions of the differential equation
\[
\{s,x\}+\frac{4}{9}\frac{(s')^2}{s^2}=0,
\]
we have $y=-w_1-2w_2-3w_3$ and $y=-2w_1-w_2-3w_3$ satisfying the generalised Chazy equation with parameter $k=3$, while $y=-2w_1-2w_2-2w_3$ solves the generalised Chazy equation with parameter $k=2$. Inverting this linear map allows us to pass from a triple $(p,q,r)$ that is a solution to the differential equations (\ref{nde2}) to a triple $(\tilde p, \tilde q, \tilde r)$ that is a solution to the system of differential equations associated to the $k=2$ equation and vice-versa. 

Likewise, a similar relationship holds for the first-order system associated to the generalised Chazy equation with parameter $k=4$ and the first-order system associated to the generalised Chazy equation with parameter $k=3$, again due to the observation that for
\begin{align*}
w_1&=-\frac{1}{2}\frac{\der}{\der x}\log\frac{s'}{s (s-1)},\\
w_2&=-\frac{1}{2}\frac{\der}{\der x}\log\frac{s'}{s-1},\\
w_3&=-\frac{1}{2}\frac{\der}{\der x}\log\frac{s'}{s},
\end{align*}
where $s$ is the Schwarz triangle function $s(\frac{1}{3},\frac{1}{3},\frac{1}{2},x)$ given by solutions of the differential equation
\[
\{s,x\}+\frac{1}{2}(s')^2\left(\frac{\frac{8}{9}}{s^2}+\frac{\frac{3}{4}}{(s-1)^2}-\frac{\frac{3}{4}}{s(s-1)}\right)=0,
\]
we have $y=-w_1-2w_2-3w_3$ and $y=-2w_1-w_2-3w_3$ satisfying the generalised Chazy equation with parameter $k=3$, while $y=-2w_1-2w_2-2w_3$
solves the generalised Chazy equation with parameter $k=4$.

\end{document}